\newcommand{\norm}[1]{\left\lVert #1 \right\rVert}
\newcommand{\argmax}{\operatornamewithlimits{argmax}}
\newtheorem{lemma}{Lemma}
\newtheorem{theorem}{Theorem}
\begin{document}

\title{On the Fenchel Duality between Strong Convexity and Lipschitz Continuous Gradient}  %
\author{Xingyu Zhou\\Department of ECE\\The Ohio State University\\zhou.2055@osu.edu}

\maketitle
\begin{abstract}

We provide a simple proof for the Fenchel duality between strong convexity and Lipschitz continuous gradient. To this end, we first establish equivalent conditions of convexity for a general function that may not be differentiable. By utilizing these equivalent conditions, we can directly obtain equivalent conditions for strong convexity and Lipschitz continuous gradient. Based on these results, we can easily prove Fenchel duality. Beside this main result, we also identify several conditions that are implied by strong convexity or Lipschitz continuous gradient, but are not necessarily equivalent to them. This means that these conditions are more general than strong convexity or Lipschitz continuous gradient themselves.

\end{abstract}



\section{Introduction}

Fenchel duality between strong convexity and Lipschitz continuous gradient was first proved in \cite{hiriart2013convex}. Roughly speaking, it says that under mild conditions, (i) if $f$ is strongly convex with parameter $\mu$, then its conjugate $f^*$ has a Lipschitz continuous gradient with parameter ${1}{/\mu}$; (ii) if $f$ has a Lipschitz continuous gradient with parameter $L$, then its conjugate $f^*$ is strongly convex with parameter ${1}{/L}$. The original proof is quite long and thus hard to obtain the key insights. In this paper, we provide a simple proof of this important result. The key ingredients in our proof are the equivalent conditions for strong convexity and Lipschitz continuous gradient. To obtain these equivalent conditions, we first show that the traditional equivalent conditions for a smooth function to be convex still hold if we replace the gradient by subgradient for a non-smooth function. With this result, we are then able to directly identify equivalent conditions for strongly convex and Lipschitz continuous gradient. Beside these equivalent conditions, we also derive several conditions that are implied by strongly convex (or Lipschitz continuous gradient), but are not necessarily equivalent to them. This means that these conditions are more general than strongly convex (or Lipschitz continuous gradient). In addition, through this process, we demonstrate some common tricks in proving equivalence in optimization.

\section{Main Result}
The following theorem states our main result, i.e., Fenchel duality between strong convexity and Lipschitz continuous gradient.

\begin{theorem}
\label{thm:main}
	\begin{enumerate}[(i)]
	A function $f$ and its Fenchel conjugate function $f^*$ satisfy the following assertions:
		\item If $f$ is closed and strong convex with parameter $\mu$, then $f^*$ has a Lipschitz continuous gradient with parameter $\frac{1}{\mu}$.
		\item If $f$ is convex and has a Lipschitz continuous gradient with parameter $L$, then $f^*$ is strong convex with parameter $\frac{1}{L}$.
	\end{enumerate}
\end{theorem}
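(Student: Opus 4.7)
The plan is to build both implications on one algebraic bridge: the Fenchel--Young subgradient reciprocity---for closed convex $f$, $y \in \partial f(x)$ if and only if $x \in \partial f^*(y)$---combined with the subgradient-based equivalent characterizations of strong convexity and of Lipschitz continuous gradient that the paper will have established beforehand. Concretely, I will use two equivalences: $\mu$-strong convexity of $f$ is equivalent to the strong monotonicity $\inner{u-v}{x-y} \ge \mu \norm{x-y}^2$ of $\partial f$, and for a convex differentiable $g$ an $L$-Lipschitz gradient is equivalent to the co-coercivity $\inner{\nabla g(x) - \nabla g(y)}{x-y} \ge (1/L)\norm{\nabla g(x) - \nabla g(y)}^2$. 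Both parts of Theorem~\ref{thm:main} then amount to transporting one of these inequalities across the conjugation $f \leftrightarrow f^*$.

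For part~(i), I would first argue that closedness and $\mu$-strong convexity together force $f^*$ to be finite everywhere and Fr\'echet-differentiable. Single-valuedness of $\partial f^*$ falls out of strong monotonicity itself: if $x_1, x_2 \in \partial f^*(y)$, Fenchel--Young gives $y \in \partial f(x_1) \cap \partial f(x_2)$, so $0 = \inner{y-y}{x_1-x_2} \ge \mu \norm{x_1-x_2}^2$ forces $x_1 = x_2$; meanwhile strong convexity implies coercivity of $f$, so the sup defining $f^*$ is attained for every $y$ and $\mathrm{dom}(f^*) = \mathbb{R}^n$. Having secured $\nabla f^*$, set $x = \nabla f^*(u)$ and $y = \nabla f^*(v)$ so that Fenchel--Young yields $u \in \partial f(x)$, $v \in \partial f(y)$; strong monotonicity of $\partial f$ then reads $\inner{u-v}{\nabla f^*(u) - \nabla f^*(v)} \ge \mu \norm{\nabla f^*(u) - \nabla f^*(v)}^2$, and Cauchy--Schwarz delivers $\norm{\nabla f^*(u) - \nabla f^*(v)} \le (1/\mu)\norm{u-v}$.

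Part~(ii) I would handle by running the same machinery in the opposite direction, which conveniently sidesteps any need to establish differentiability of $f^*$. For any $u,v$ and any $x \in \partial f^*(u)$, $y \in \partial f^*(v)$, Fenchel--Young gives $u = \nabla f(x)$ and $v = \nabla f(y)$, so the co-coercivity of $\nabla f$ rewrites immediately as $\inner{x-y}{u-v} \ge (1/L)\norm{u-v}^2$. This is exactly strong monotonicity of $\partial f^*$ with parameter $1/L$, which by the subgradient equivalence for strong convexity is $(1/L)$-strong convexity of $f^*$.

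The step I expect to be the main obstacle is the regularity portion of~(i): promoting ``$\partial f^*$ is single-valued with full domain'' to ``$f^*$ is Fr\'echet-differentiable on $\mathbb{R}^n$''. The domain statement requires the sup in $f^*(y) = \sup_x\{\inner{y}{x} - f(x)\}$ to be attained, which is exactly what closedness plus coercivity (itself a consequence of strong convexity) provide; and single-valuedness of a convex subdifferential on an open set is a standard but non-trivial sufficient condition for differentiability. Once these technicalities are cleared, every remaining step is a short algebraic rewriting via Fenchel--Young.
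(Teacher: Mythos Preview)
Your proposal is correct and follows essentially the same route as the paper: transport a monotonicity-type inequality across the subgradient reciprocity $y \in \partial f(x) \iff x \in \partial f^*(y)$ and then read off the conclusion via the equivalent characterizations of strong convexity (Lemma~\ref{lem:SC_equiv}) and Lipschitz gradient (Lemma~\ref{lem:LCG}). The only cosmetic differences are that in part~(i) the paper applies Cauchy--Schwarz \emph{before} translating (using $\norm{s_x - s_y} \ge \mu\norm{x-y}$ from Lemma~\ref{lem:SC_implication}(ii)) whereas you translate to co-coercivity of $\nabla f^*$ first and apply Cauchy--Schwarz last, and your treatment of the differentiability of $f^*$ is more self-contained than the paper's, which delegates that step to a separate lemma.
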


In order to present a simple proof for this result, we will present some useful results first. 

\section{Equivalent Conditions for Convexity}
In this section, we generalize the equivalent conditions of convexity for a smooth function to the general case that the function may be non-smooth. We show that once we replace the gradient by the subgradient, all the previous conditions still hold for a general function. 

\begin{lemma}[Equivalence for Convexity]
		Suppose $f: \mathbb{R}^n \rightarrow \mathbb{R}$ with the extended-value extension. Then, the following statements are equivalent:
		\begin{enumerate}[(i)]
			\item {(Jensen's inequality):} $f$ is convex.
			\item {(First-order):} $f(y) \ge f(x) + s_x^T(y-x)$ for all $x$, $y$ and any $s_x \in \partial f(x)$.
			\item {(Monotonicity of subgradient):} $(s_y - s_x)^T(y-x)\ge 0$ for all $x$, $y$ and any $s_x \in \partial f(x)$, $s_y \in \partial f(y)$.
		\end{enumerate}	
\end{lemma}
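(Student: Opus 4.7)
The plan is to establish the cyclic implications $(i) \Rightarrow (ii) \Rightarrow (iii) \Rightarrow (i)$, since each step is either standard or reduces to a small number of manipulations.

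For $(i) \Rightarrow (ii)$, I would start from Jensen's inequality $f(x + t(y-x)) \le (1-t) f(x) + t f(y)$ for $t \in (0,1)$, which rearranges to $\frac{f(x + t(y-x)) - f(x)}{t} \le f(y) - f(x)$. Combining this with the defining subgradient inequality $f(x + t(y-x)) - f(x) \ge t\, s_x^T(y-x)$ and letting $t \to 0^+$ yields the first-order condition $f(y) \ge f(x) + s_x^T(y-x)$.

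For $(ii) \Rightarrow (iii)$, I would apply $(ii)$ once at $(x, s_x)$ and once with the roles of $x$ and $y$ swapped at $(y, s_y)$:
\[
f(y) \ge f(x) + s_x^T(y-x), \qquad f(x) \ge f(y) + s_y^T(x-y),
\]
and then add the two inequalities. The $f$-terms cancel and what remains is $0 \ge -(s_y - s_x)^T(y-x)$, which is exactly the claimed monotonicity.

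The bulk of the work is $(iii) \Rightarrow (i)$. My plan is to reduce to a univariate problem: for fixed $x, y$, define $g(t) := f(x + t(y-x))$ on $[0,1]$ and show $g$ is convex, whence Jensen's inequality for $f$ at $(1-\lambda)x + \lambda y$ follows by evaluating $g$ at $\lambda$. By the chain rule for subdifferentials, any $s \in \partial f(x + t(y-x))$ produces $s^T(y-x) \in \partial g(t)$; then $(iii)$ applied to the points $x_i = x + t_i(y-x)$ with $t_1 < t_2$ gives $(s_2 - s_1)^T(y-x) \ge 0$, so the subgradients of $g$ are nondecreasing in $t$. A scalar function with nondecreasing subgradients is convex (via the standard secant-slope argument), completing the cycle. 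I expect the main obstacle to be this univariate monotonicity-implies-convexity step, together with justifying the chain rule and the existence of subgradients along the segment; a cleaner alternative is to prove $(iii) \Rightarrow (ii)$ directly by using monotonicity along the segment to bound $f(y) - f(x)$ from below by $s_x^T(y-x)$, and then derive $(i)$ from $(ii)$ by applying the first-order inequality at the convex combination $z = (1-\lambda)x + \lambda y$ to both $x$ and $y$ and forming the weighted sum.
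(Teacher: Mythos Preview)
Your proposal is correct and matches the paper's cyclic structure $(i)\Rightarrow(ii)\Rightarrow(iii)\Rightarrow(i)$; in particular your ``cleaner alternative'' for $(iii)\Rightarrow(i)$---recover $(ii)$ via monotonicity along the segment and then deduce $(i)$ by applying $(ii)$ at the convex combination $z=(1-\lambda)x+\lambda y$---is exactly the paper's route (it writes $f(y)-f(x)=\int_0^1 s_\alpha^T(y-x)\,d\alpha$ with $s_\alpha\in\partial f(x_\alpha)$, bounds the integrand via $(iii)$, and then proves $(ii)\Rightarrow(i)$ just as you describe). The only minor deviation is in $(i)\Rightarrow(ii)$: the paper passes through the directional derivative identity $f'(x,y-x)=\sup_{s\in\partial f(x)}s^T(y-x)$, whereas you combine the difference quotient with the defining subgradient inequality at an intermediate point---note that once you invoke $f(z)\ge f(x)+s_x^T(z-x)$ you may as well take $z=y$ directly and skip the limit.
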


\begin{proof}
	(i) $\implies$ (ii): Since $f$ is convex, by definition, we have for any $0 < \alpha \le 1$
	\begin{align*}
		&f(x + \alpha(y-x)) \le \alpha f(y) + (1-\alpha)f(x)\\
		\iff& \frac{f(x + \alpha(y-x)) - f(x)}{\alpha} \le f(y) - f(x)\\
		\overset{(a)}{\implies} & f'(x,y-x) \le f(y) - f(x)\\
		\overset{(b)}{\implies} & s_x^T(y-x) \le f(y) - f(x) \quad \forall s_x \in \partial f(x)
	\end{align*}
	where (a) is obtained by letting $\alpha \to 0$ and the definition of \textit{directional derivate}. Note that the limit always exists for a convex $f$ by the  monotonicity of difference quotient; (b) follows from the fact that $f'(x,v) = \sup_{s \in \partial f(x)} s^Tv$ for a convex $f$; see \cite{boyd2004convex}.

	(ii) $\implies$ (iii): From (ii), we have 
	\begin{align*}
		&f(y) \ge f(x) + s_x^T(y-x)\\
		&f(x) \ge f(y) + s_y^T(x-y)
	\end{align*}
	Adding together directly yields (iii).

	(iii) $\implies$ (i): To this end, we will show (iii) $\implies$ (ii) and (ii) $\implies$ (i). 
	
	First, (iii) $\implies$ (ii):  Let $\phi(\alpha):= f(x + \alpha(y-x))$ and $x_{\alpha} := x + \alpha(y-x)$, then
	\begin{align}
		f(y) - f(x) = \phi(1) - \phi(0) = \int_0^1 s_{\alpha}^T(y-x) d\alpha
	\end{align}
	where $s_{\alpha} \in \partial f(x_{\alpha})$ for $t \in [0,1]$. Then by (iii), for any $s_x \in \partial f(x)$, we have 
	\begin{align*}
		s_{\alpha}^T(y-x) \ge s_x^T(y-x)
	\end{align*}
	which combined with inequality above directly implies (ii).

	Second, (ii) $\implies$ (i): By (ii), we have 
	\begin{align}
		&f(y) \ge f(x_{\alpha}) + s_t^T(y-x_{\alpha}) \label{eq:1}\\
		&f(x) \ge f(x_{\alpha}) + s_t^T(x-x_{\alpha}) \label{eq:2}
	\end{align}
	Multiplying \eqref{eq:1} by $\alpha$ and \eqref{eq:2} by $1-\alpha$, and adding together yields (i).
\end{proof}

\section{Conditions Related to Strong Convexity}
In this section, we will show several conditions that are equivalent to strong convexity by adopting the equivalent conditions of convexity shown in the last section. Moreover, we are also able to identify several conditions that are more general than strong convexity in the sense that these conditions can be implied by strong convexity but the reverse direction does not hold. 

\begin{lemma}[Equivalence for Strong Convexity]
\label{lem:SC_equiv}
		Suppose $f: \mathbb{R}^n \rightarrow \mathbb{R}$ with the extended-value extension. Then, the following statements are equivalent:
		\begin{enumerate}[(i)]
			\item $f$ is strongly convex with parameter $\mu$.
			\item $f(\alpha x+ (1-\alpha) y) \le \alpha f(x) + (1-\alpha) f(y)-\frac{\mu}{2}\alpha(1-\alpha)\norm{y-x}^2$ for any $x$, $y$.
			\item $f(y) \ge f(x) + s_x^T(y-x) + \frac{\mu}{2}\norm{y-x}^2$ for all $x$, $y$ and any $s_x \in \partial f(x)$.
			\item $(s_y - s_x)^T(y-x)\ge \mu \norm{y-x}^2$ for all $x$, $y$ and any $s_x \in \partial f(x)$, $s_y \in \partial f(y)$.
		\end{enumerate}	
	\end{lemma}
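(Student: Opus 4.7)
The plan is to reduce this lemma to the previously established Equivalence for Convexity lemma via the standard shift trick. Define $g(x) := f(x) - \frac{\mu}{2}\|x\|^2$, and recall (or take as definition of item (i)) that $f$ is strongly convex with parameter $\mu$ precisely when $g$ is convex. If I can match the four conditions of this lemma applied to $f$ with the three conditions of the previous lemma applied to $g$ (using that Jensen's inequality for $g$ will correspond to item (ii) here), then the equivalence follows essentially for free.

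The key preliminary is the subdifferential identity $\partial g(x) = \partial f(x) - \mu x$, which holds because $\frac{\mu}{2}\|\cdot\|^2$ is smooth with gradient $\mu x$ and the sum rule applies when one summand is smooth. Thus $t_x \in \partial g(x)$ iff $t_x = s_x - \mu x$ for some $s_x \in \partial f(x)$. With this in hand, each translation is a routine calculation. Jensen's inequality for $g$ becomes item (ii) via the algebraic identity $\alpha\|x\|^2 + (1-\alpha)\|y\|^2 - \|\alpha x + (1-\alpha) y\|^2 = \alpha(1-\alpha)\|y-x\|^2$. The first-order condition for $g$, namely $g(y) \ge g(x) + t_x^T(y-x)$, rearranges after completing the square to $f(y) \ge f(x) + s_x^T(y-x) + \frac{\mu}{2}\|y-x\|^2$, i.e.\ item (iii). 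Subgradient monotonicity for $g$, namely $(t_y - t_x)^T(y-x) \ge 0$, becomes $(s_y-s_x)^T(y-x) \ge \mu\|y-x\|^2$ after substituting $t_x = s_x - \mu x$ and $t_y = s_y - \mu y$, i.e.\ item (iv).

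No individual step is genuinely hard. The main thing to be careful about is the subdifferential calculus identity $\partial g(x) = \partial f(x) - \mu x$; once that is established, the equivalences (i) $\Leftrightarrow$ (ii) $\Leftrightarrow$ (iii) $\Leftrightarrow$ (iv) are obtained simply by transporting the three equivalences from the convexity lemma back through the shift. This is precisely the simple-proof strategy announced in the introduction, and it parallels what one would do to derive the analogous characterizations of Lipschitz continuous gradient, by shifting with $\frac{L}{2}\|\cdot\|^2$ in the opposite direction.
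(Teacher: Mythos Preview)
Your proposal is correct and follows precisely the paper's own approach: the paper's proof also defines $g(x)=f(x)-\frac{\mu}{2}\|x\|^2$ and obtains each of (ii), (iii), (iv) by invoking, respectively, Jensen's inequality, the first-order condition, and subgradient monotonicity from the convexity lemma applied to $g$. Your version simply makes explicit the subdifferential shift $\partial g(x)=\partial f(x)-\mu x$ and the quadratic identity $\alpha\|x\|^2+(1-\alpha)\|y\|^2-\|\alpha x+(1-\alpha)y\|^2=\alpha(1-\alpha)\|y-x\|^2$ that the paper leaves implicit.
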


\begin{proof}
	(i) $\iff$ (ii): It follows from the definition of convexity for $g(x) = f(x)-\frac{\mu}{2}x^Tx$.

(i) $\iff$ (iii): It follows from the equivalent first-order condition of convexity for $g$.

(i) $\iff$ (iv): It follows from the equivalent monotonicity of (sub)-gradient of convexity for $g$.
\end{proof}

\begin{lemma}[Implications of Strong Convexity (SC)]
\label{lem:SC_implication}
	Suppose $f: \mathbb{R}^n \rightarrow \mathbb{R}$ with the extended-value extension. The following conditions are all implied by strong convexity with parameter $\mu$:
	\begin{enumerate}[(i)]
			\item $\frac{1}{2}\norm{s_x}^2\ge \mu (f(x)-f^*),~\forall x$ and $s_x \in \partial f(x)$.
			\item $\norm{s_y-s_x} \ge \mu \norm{y - x}$ $\forall x$, $y$ and any $s_x \in \partial f(x)$, $s_y \in \partial f(y)$.
			\item $f(y) \le f(x) + s_x^T(y-x) + \frac{1}{2\mu}\norm{s_y-s_x}^2$ $\forall x$, $y$ and any $s_x \in \partial f(x)$, $s_y \in \partial f(y)$.
			\item $(s_y - s_x)^T(y-x)\le \frac{1}{\mu} \norm{s_y-s_x}^2$  $\forall$$x$, $y$ and any $s_x \in \partial f(x)$, $s_y \in \partial f(y)$.
	\end{enumerate}	
\end{lemma}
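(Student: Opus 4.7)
The plan is to derive (i)--(iv) in order, using only Lemma~\ref{lem:SC_equiv} and some elementary manipulations. For (i), I would start from the strong convexity lower bound $f(y) \ge f(x) + s_x^T(y-x) + \frac{\mu}{2}\norm{y-x}^2$ (which is the equivalent form (iii) in Lemma~\ref{lem:SC_equiv}) and minimize the right-hand side over $y$. The minimizer is $y^\star = x - s_x/\mu$, which gives the bound $f^* \le f(y^\star) \le f(x) - \frac{1}{2\mu}\norm{s_x}^2$; rearranging yields (i). Here $f^*$ is the minimum value of $f$, which exists because a strongly convex closed function is coercive.

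For (ii), I would combine the monotonicity form (iv) of Lemma~\ref{lem:SC_equiv}, namely $(s_y-s_x)^T(y-x) \ge \mu \norm{y-x}^2$, with the Cauchy--Schwarz inequality $(s_y-s_x)^T(y-x) \le \norm{s_y-s_x}\,\norm{y-x}$. Dividing through by $\norm{y-x}$ (the case $x=y$ being trivial) produces (ii).

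For (iii), the key trick is to apply (i) to the auxiliary function $h(z) := f(z) - s_x^T z$. Since translating by a linear term preserves strong convexity, $h$ is still $\mu$-strongly convex, and $0 = s_x - s_x \in \partial h(x)$ shows that $x$ is the global minimizer of $h$, so $h^* = h(x)$. Also $s_y - s_x \in \partial h(y)$. Plugging these into (i) gives
\begin{align*}
\tfrac{1}{2}\norm{s_y-s_x}^2 \ge \mu\bigl(h(y) - h(x)\bigr) = \mu\bigl(f(y) - f(x) - s_x^T(y-x)\bigr),
\end{align*}
which is exactly (iii) after rearrangement. This step is, I expect, the main conceptual obstacle: recognizing that (iii) is just (i) for a shifted function. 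The rest is bookkeeping.

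For (iv), I would apply (iii) twice with the roles of $x$ and $y$ swapped and add the two resulting inequalities; the $f(x)$ and $f(y)$ terms cancel, and what remains simplifies to $(s_y-s_x)^T(y-x) \le \frac{1}{\mu}\norm{s_y-s_x}^2$. Thus the logical chain is (Lemma~\ref{lem:SC_equiv}) $\Rightarrow$ (i), (ii); (i) $\Rightarrow$ (iii); (iii) $\Rightarrow$ (iv), and each deduction only relies on already-established results.
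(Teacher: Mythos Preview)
Your approach is essentially identical to the paper's: minimize the strong-convexity lower bound over $y$ for (i), apply Cauchy--Schwarz to the monotonicity inequality for (ii), shift by a linear term and reapply (i) to the auxiliary function for (iii), and symmetrize (iii) for (iv). One slip to fix in (i): the displayed chain $f^* \le f(y^\star) \le f(x) - \frac{1}{2\mu}\norm{s_x}^2$ has the second inequality in the wrong direction---plugging $y^\star$ into the \emph{lower} bound gives $f(y^\star) \ge f(x) - \frac{1}{2\mu}\norm{s_x}^2$, and combining this with $f^* \le f(y^\star)$ yields nothing; the correct step (and the one the paper uses) is to take the infimum of both sides over $y$, giving $f^* = \inf_y f(y) \ge \inf_y\big[f(x) + s_x^T(y-x) + \frac{\mu}{2}\norm{y-x}^2\big] = f(x) - \frac{1}{2\mu}\norm{s_x}^2$.
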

\textbf{Note:} In the case of smooth function $f$, (i) reduces to {Polyak-Lojasiewicz (PL) inequality}, which is more general than strong convexity and is extremely useful for linear convergence.

\begin{proof}
	(SC) $\implies$ (i): By equivalence of strong convexity in the previous lemma, we have 
\begin{align*}
	f(y) \ge f(x) + s_x^T(y-x) + \frac{\mu}{2}\norm{y-x}^2
\end{align*}
Taking minimization with respect to $y$ on both sides, yields
\begin{align*}
	f^* \ge f(x) - \frac{1}{2\mu}\norm{s_x}^2
\end{align*}
Re-arranging it yields (i).

SC $\implies$ (ii): By equivalence of strong convexity in the previous lemma, we have 
\begin{align*}
	(s_y - s_x)^T(y-x)\ge \mu \norm{y-x}^2
\end{align*}
Applying Cauchy-Schwartz inequality to the left-hand-side, yields (ii).

SC $\implies$ (iii): For any given $s_x \in \partial f(x)$, let $\phi_x(z) := f(z) - s_x^T z$. First, since 
\begin{align*}
	(s_{z_1}^{\phi} - s_{z_2}^{\phi})(z_1 - z_2) = (s_{z_1}^{f} - s_{z_2}^{f})(z_1 - z_2) \ge \mu\norm{z_1 - z_2}^2
\end{align*}
which implies that $\phi_x(z)$ is also strong convexity with parameter $\mu$.

Then, applying PL-inequality to $\phi_x(z)$, yields
\begin{align*}
	\phi_x^*  = f(x) - s_x^Tx &\ge \phi_x(y) - \frac{1}{2\mu}\norm{s_y^{\phi}}^2\\
	& = f(y) - s_x^Ty - \frac{1}{2\mu}\norm{s_y - s_x}^2.
\end{align*}
Re-arranging it yields (iii).

SC $\implies$ (iv): From previous result, we have 
\begin{align*}
	&f(y) \le f(x) + s_x^T(y-x) + \frac{1}{2\mu}\norm{s_y-s_x}^2\\
	&f(x) \le f(y) + s_y^T(x-y) + \frac{1}{2\mu}\norm{s_x-s_y}^2
\end{align*}
Adding together yields (iv).
\end{proof}

\section{Conditions Related to Lipschitz Continuous Gradient}
In this section, we present a detailed relationship between all conditions that are related to Lipschitz continuous gradient. In particular, we show that if $f$ is convex, then all the following conditions are equivalent.

\begin{lemma}
\label{lem:LCG}
	For a function $f$ with a Lipschitz continuous gradient over $\mathbb{R}^n$, the following relations hold:
	\begin{align}
		[5] \iff [7] \implies [6] \implies [0] \implies [1] \iff [2] \iff [3] \iff [4] \label{eq:relation}
	\end{align}
If the function $f$ is convex, then all the conditions $[0]-[7]$ are equivalent.
\begin{align*}
	[0]~&\lVert\nabla f(x) - \nabla f(y)\rVert \le L \lVert x-y\rVert,~\forall x,y.\\
	[1]~&g(x) = \frac{L}{2}x^T x - f(x) \text{ is convex },~\forall x\\
	[2]~&f(y)\le f(x)+\nabla f(x)^T(y-x)+\frac{L}{2}\lVert y-x\rVert^2,~\forall x,y.\\
	[3]~&(\nabla f(x) - \nabla f(y)^T(x-y) \le L \rVert x-y\rVert^2, ~\forall x,y.\\
	[4]~&f(\alpha x+ (1-\alpha) y) \ge \alpha f(x) + (1-\alpha) f(y) - \frac{\alpha (1-\alpha)L}{2}\lVert x-y\rVert^2,~\forall x,y \text{ and } \alpha \in [0,1]\\
	[5]~&f(y)\ge f(x)+\nabla f(x)^T(y-x)+\frac{1}{2L}\lVert\nabla f(y)-\nabla f(x)\rVert^2,~\forall x,y.\\
	[6]~&(\nabla f(x) - \nabla f(y)^T(x-y) \ge \frac{1}{L} \lVert \nabla f(x)-\nabla f(y)\rVert^2, ~\forall x,y.\\
	[7]~&f(\alpha x+ (1-\alpha) y) \le \alpha f(x) + (1-\alpha) f(y) - \frac{\alpha (1-\alpha)}{2L}\lVert\nabla f(x)-\nabla f(y)\rVert^2,~\forall x,y \text{ and }\alpha \in [0,1].
\end{align*}
\end{lemma}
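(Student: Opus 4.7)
The plan is to decompose the claim into three pieces: the convexity-equivalence block $[1]\iff[2]\iff[3]\iff[4]$; the linear chain $[5]\Rightarrow[6]\Rightarrow[0]\Rightarrow[1]$; and the separate equivalence $[5]\iff[7]$. The convex case is then closed by showing $[2]\Rightarrow[5]$. For the first block, I observe that $[2]$, $[3]$ and $[4]$ are exactly the first-order, monotonicity-of-gradient, and Jensen characterizations (from the Equivalence for Convexity lemma) applied to the auxiliary function $g(x):=\frac{L}{2}x^Tx-f(x)$, whose convexity is precisely $[1]$. So the entire block drops out of a single appeal to that lemma.

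For the remaining non-convex chain, $[0]\Rightarrow[3]$ is immediate by Cauchy--Schwarz, $(\nabla f(x)-\nabla f(y))^T(x-y)\le\|\nabla f(x)-\nabla f(y)\|\,\|x-y\|\le L\|x-y\|^2$, and combining with the first block yields $[0]\Rightarrow[1]$. Applying Cauchy--Schwarz to the left-hand side of $[6]$ gives $[6]\Rightarrow[0]$ after cancelling one factor of $\|\nabla f(x)-\nabla f(y)\|$. For $[5]\Rightarrow[6]$, I write $[5]$ at $(x,y)$ and at $(y,x)$ and add: the function values cancel and I am left with exactly $[6]$.

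The equivalence $[5]\iff[7]$ is the most delicate piece. For $[5]\Rightarrow[7]$, set $z=\alpha x+(1-\alpha)y$ and apply $[5]$ at $(z,x)$ and at $(z,y)$; taking the convex combination with weights $\alpha$ and $1-\alpha$ cancels the linear-in-$\nabla f(z)$ terms and leaves $f(z)\le\alpha f(x)+(1-\alpha)f(y)-\frac{\alpha}{2L}\|\nabla f(x)-\nabla f(z)\|^2-\frac{1-\alpha}{2L}\|\nabla f(y)-\nabla f(z)\|^2$. The conclusion $[7]$ then follows from the elementary quadratic identity $\alpha\|u-w\|^2+(1-\alpha)\|v-w\|^2\ge\alpha(1-\alpha)\|u-v\|^2$, whose minimum over $w$ is attained at $w=\alpha u+(1-\alpha)v$ and equals exactly the right-hand side. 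Conversely, $[7]\Rightarrow[5]$ follows by setting $\alpha=1-t$, rearranging as $[f(x+t(y-x))-f(x)]/t+f(x)\le f(y)-\frac{1-t}{2L}\|\nabla f(x)-\nabla f(y)\|^2$, and letting $t\to 0$ so that the difference quotient on the left converges to $\nabla f(x)^T(y-x)$.

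To close the loop in the convex case, I show $[2]\Rightarrow[5]$. Fix $x$ and define $\phi_x(z):=f(z)-\nabla f(x)^T z$; convexity of $f$ makes $\phi_x$ convex, $\nabla\phi_x(x)=0$ makes $x$ the global minimizer of $\phi_x$, and $\phi_x$ inherits a Lipschitz gradient with constant $L$, so it satisfies $[2]$. Minimizing the descent inequality $\phi_x(y')\le\phi_x(y)+\nabla\phi_x(y)^T(y'-y)+\frac{L}{2}\|y'-y\|^2$ over $y'$ (at $y'=y-\frac{1}{L}\nabla\phi_x(y)$) gives $\phi_x(x)\le\phi_x(y)-\frac{1}{2L}\|\nabla\phi_x(y)\|^2$, which unpacks to $[5]$. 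The main obstacle I expect is the $[5]\iff[7]$ equivalence: it does not drop out of a single earlier lemma and requires both the quadratic-minimization identity above and a careful $\alpha\to1$ limit, whereas the rest of the argument reduces to Cauchy--Schwarz, symmetrization, or direct substitution into the Equivalence for Convexity lemma.
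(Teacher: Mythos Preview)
Your proposal is correct and follows essentially the same approach as the paper: the $[1]\iff[2]\iff[3]\iff[4]$ block via convexity of $g$, the chain $[5]\Rightarrow[6]\Rightarrow[0]\Rightarrow[3]$ via symmetrize-and-add plus Cauchy--Schwarz, the $[5]\iff[7]$ equivalence via the weighted combination at $z=\alpha x+(1-\alpha)y$ together with the quadratic lower bound and the $\alpha\to 0$ limit, and the convex closure via the auxiliary function $\phi_x(z)=f(z)-\nabla f(x)^T z$ all match the paper's argument. The only cosmetic difference is that the paper phrases the convex-case closing step as $[3]\Rightarrow[5]$ rather than $[2]\Rightarrow[5]$, but since $[2]\iff[3]$ this is the same argument.
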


\begin{proof}

[1] $\iff$ [2]: It follows from the first-order equivalence of convexity.

[1] $\iff$ [3]: It follows from the monotonicity of gradient equivalence of convexity.

[1] $\iff$ [4]: It follows from the definition of convexity.

[5] $\implies$ [7]: Let $x_{\alpha} := \alpha x + (1-\alpha)y$, then 
\begin{align*}
	&f(x)\ge f(x_{\alpha})+\nabla f(x_{\alpha})^T(x-x_{\alpha})+\frac{1}{2L}\lVert \nabla f(x)-\nabla f(x_{\alpha})\rVert^2\\
	&f(y)\ge f(x_{\alpha})+\nabla f(z)^T(y-x_{\alpha})+\frac{1}{2L}||\nabla f(y)-\nabla f(x_{\alpha})||^2
\end{align*}
Multiplying the first inequality with $\alpha$ and second inequality with $1-\alpha$, and adding them together and using $\alpha \lVert x\rVert^2 + (1-\alpha) \lVert y\rVert^2 \ge \alpha (1-\alpha)\lVert x-y\rVert^2$, yields [7]. 

[7] $\implies$ [5]: Interchanging $x$ and $y$ in [7] and re-writing it as 
\begin{align*}
	f(y) \ge f(x) + \frac{f(x+\alpha (y-x)) -f(x)}{\alpha} + \frac{1-\alpha}{2L}\lVert\nabla f(x) - \nabla f(y)\rVert^2
\end{align*}
Letting $\alpha \to 0$, yields [5].

[5] $\implies$ [6]: Interchanging $x$ and $y$ and adding together.

[6] $\implies$ [0]: It follows directly from Cauchy-Schwartz inequality.

[0] $\implies$ [3]: It following directly from Cauchy-Schwartz inequality.

Now, suppose $f$ is convex, then we have 
[3] $\implies$ [5]: Let $\phi_x(z): = f(z) - \nabla f(x)^T z$. Note that since $f$ is convex, $\phi_x(z)$ attains its minimum at $z^* = x$. By [3], we have 
\begin{align*}
	(\nabla \phi_x(z_1) - \nabla \phi_x(z_2)^T(z_1-z_2) \le L \rVert z_1-z_2\rVert^2
\end{align*}
which implies that 
\begin{align*}
	\phi_x(z)\le \phi_x(y)+\nabla \phi_x(y)^T(z-y)+\frac{L}{2}\lVert z-y\rVert^2.
\end{align*}
Taking minimization over $z$, yields [5].
\end{proof}

\section{Useful Results On Fenchel Conjugate}
Before we finally prove the main result, we need the following useful results on conjugate functions.

\begin{lemma}
\label{lem:useful_1}
Consider the following conditions for a general function $f$:
\begin{align*}
	[1]~& f^*(s) =  s^Tx - f(x)\\
	[2]~& s \in \partial f(x)\\
	[3]~& x \in \partial f^*(s)
\end{align*}
Then, we have 
\begin{align*}
	[1] \iff [2] \implies [3]
\end{align*}
Further, if $f$ is closed and convex, then all these conditions are equivalent.
\end{lemma}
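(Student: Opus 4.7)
The plan is to exploit the definition $f^*(s) = \sup_y \{s^T y - f(y)\}$ together with the Fenchel--Young inequality $f(x) + f^*(s) \ge s^T x$ that follows from it. The equivalence $[1] \iff [2]$ should fall out of a one-line algebraic rewrite: $[1]$ says exactly that $x$ attains the supremum in the definition of $f^*(s)$, i.e.\ $s^T x - f(x) \ge s^T y - f(y)$ for all $y$, and rearranging this inequality to $f(y) \ge f(x) + s^T(y-x)$ for all $y$ is precisely the definition of $s \in \partial f(x)$, which is $[2]$.

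For $[1] \implies [3]$, I would start from the assumed identity $f^*(s) = s^T x - f(x)$ and, for an arbitrary test point $t$, use the definitional lower bound
\begin{align*}
f^*(t) \;=\; \sup_y \{t^T y - f(y)\} \;\ge\; t^T x - f(x) \;=\; (t-s)^T x + \bigl(s^T x - f(x)\bigr) \;=\; f^*(s) + x^T(t-s),
\end{align*}
which is exactly the subgradient inequality $x \in \partial f^*(s)$. No convexity of $f$ is used here, because $f^*$ is always convex as a pointwise supremum of affine functions.

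For the converse $[3] \implies [1]$ under the closedness and convexity assumption, the natural tool is biconjugation: $f^{**} = f$ when $f$ is closed and convex. Applying the already-established equivalence $[1]\iff[2]$ to the function $f^*$ (whose conjugate is $f^{**}=f$), the condition $x \in \partial f^*(s)$ gives $f^{**}(x) = x^T s - f^*(s)$, i.e.\ $f(x) = x^T s - f^*(s)$, which after rearrangement is $[1]$. The only place convexity/closedness is needed is this final step, which matches the statement.

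I do not expect a real obstacle; the main thing to be careful about is not accidentally invoking $f^{**} = f$ in the unconditional direction $[1] \implies [3]$, since that implication should hold for arbitrary $f$. Everything else is a direct unpacking of definitions plus one use of the Fenchel--Young inequality.
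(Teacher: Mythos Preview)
Your proposal is correct and follows essentially the same route as the paper: the same algebraic rewrite for $[1]\iff[2]$, the same Fenchel--Young lower bound $f^*(t)\ge t^Tx-f(x)$ rearranged into the subgradient inequality for $[1]\implies[3]$, and the same appeal to $f^{**}=f$ to close the loop in the closed convex case. The only cosmetic difference is that the paper phrases the forward implication as $[2]\implies[3]$ (first invoking $[2]\implies[1]$) and the closing step as $[3]\implies[2]$, whereas you go via $[1]$; your slightly more explicit use of $[1]\iff[2]$ applied to $f^*$ in the last step is exactly what the paper's terse ``follows from $f^{**}=f$'' amounts to.
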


\begin{proof}

	[1] $\iff$ [2]: 
\begin{align*}
	s \in \partial f(x) &\iff f(y) \ge f(x) + s^T(y-x)~\forall y\\
	& \iff s^Tx - f(x) \ge  s^Ty -f(y)~\forall y\\
	& \iff s^Tx - f(x) \ge f^*(s)
\end{align*}
Since $f^*(s) \ge s^Tx - f(x)$ always holds, [1] $\iff$ [2].

[2] $\implies$ [3]: If [2] holds, then by previous result we have $f^*(s) = s^Tx - f(x)$. Thus,
\begin{align*}
	f^*(z) &= \sup_u(z^T u - f(u))\\
	& \ge z^T x - f(x)\\
	& = s^Tx - f(x) + x^T(z-s)\\
	& = f^*(s) + x^T(z-s)
\end{align*}
This holds for all $z$, which implies $x \in \partial f^*(s)$.

For a closed and convex function, we have 
[3] $\implies$ [2]: This follows from $f^{**} = f$. 
\end{proof}

\begin{lemma}[Differentiability]
	For a closed and strictly convex $f$, $\nabla f^*(s) = \argmax_x(s^T x - f(x))$.
\end{lemma}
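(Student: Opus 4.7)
The plan is to combine strict convexity of $f$ (which forces uniqueness of the maximizer) with the subgradient equivalences already established in Lemma \ref{lem:useful_1} (which link the maximizer to $\partial f^*(s)$), and then invoke the standard fact that a convex function with a singleton subdifferential at a point is differentiable there with gradient equal to that subgradient.

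First, I would observe that the map $x \mapsto s^T x - f(x)$ is strictly concave (since $f$ is strictly convex and $s^T x$ is affine), hence admits \emph{at most one} maximizer. Assuming the supremum defining $f^*(s)$ is attained, let $x^* = \argmax_x (s^T x - f(x))$, so that $f^*(s) = s^T x^* - f(x^*)$. Directly from Lemma \ref{lem:useful_1}, condition [1] implies [3], which gives $x^* \in \partial f^*(s)$.

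Second, I would show that $\partial f^*(s)$ cannot contain anything else. Suppose $y \in \partial f^*(s)$. Since $f$ is closed and convex, the full equivalence in Lemma \ref{lem:useful_1} applies, so $y \in \partial f^*(s)$ yields $s \in \partial f(y)$, which in turn yields $f^*(s) = s^T y - f(y)$, i.e., $y$ is also a maximizer of $s^T x - f(x)$. By the uniqueness argument above, $y = x^*$. Hence $\partial f^*(s) = \{x^*\}$.

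Third, I would conclude by invoking the classical fact from convex analysis that a convex function (here $f^*$, which is always convex as a supremum of affine functions) is differentiable at an interior point of its domain if and only if its subdifferential there is a singleton, in which case the gradient coincides with the unique subgradient. This gives $\nabla f^*(s) = x^* = \argmax_x(s^T x - f(x))$. The main obstacle, and the reason the strict convexity hypothesis is essential, is step one: without strict convexity of $f$, the maximizer set could contain multiple points, $\partial f^*(s)$ would fail to be a singleton, and $f^*$ would not be differentiable at $s$. Closedness of $f$ is equally crucial because it is exactly what upgrades the implication [3] $\implies$ [2] in Lemma \ref{lem:useful_1} to an equivalence, which is what allows us to conclude that every element of $\partial f^*(s)$ is a maximizer.
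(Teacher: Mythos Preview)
Your proposal is correct and follows essentially the same route as the paper: use Lemma~\ref{lem:useful_1} (specifically the equivalence $[3]\Leftrightarrow[2]\Leftrightarrow[1]$ available when $f$ is closed and convex) to identify every element of $\partial f^*(s)$ with a maximizer of $s^Tx-f(x)$, then use strict convexity to force uniqueness, and conclude that the singleton subdifferential yields differentiability with $\nabla f^*(s)$ equal to that unique maximizer. The only cosmetic difference is that the paper starts by picking two subgradients $x_1,x_2\in\partial f^*(s)$ and showing $x_1=x_2$, whereas you first fix the maximizer $x^*$ and then show any other subgradient must equal it; the logical content is identical.
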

\begin{proof}
	Suppose $x_1 \in \partial f^*(s)$ and $x_2 \in \partial f^*(s)$. By closeness and convexity, we have [3] $\implies$ [2], which gives 
	\begin{align*}
		s \in \partial f{x_1} \text{ and } s \in \partial f{x_2}
	\end{align*}
	Then, by previous result ([2] $\implies$ [1]), we have 
	\begin{align*}
		x_1 = \argmax(s^Tx -f(x)) \text{ and } x_2 = \argmax(s^Tx -f(x))
	\end{align*}
	By strictly convexity, $s^T x -f(x)$ has a unique maximizer for every $s$. Thus, $x:=x_1 = x_2$. Therefore, $\partial f^*(s) = \{x\}$, which implies $f^*$ is differentiable at $s$ and $\nabla f^*(s) = \argmax_x(s^T x - f(x))$.
\end{proof}

\section{Proof of Theorem \ref{thm:main}}
\label{sec:proof}
Now, we are well prepared to present the proof of our main result stated in Theorem \ref{thm:main}.
\begin{proof}
	(i): By implication of strong convexity in Lemma \ref{lem:SC_implication} , we have 
	\begin{align*}
		\norm{s_x - s_y} \ge \mu\norm{x-y}~\forall s_x\in \partial f(x), s_y\in \partial f(y)
	\end{align*}
	which implies (based on Lemma \ref{lem:useful_1})
	\begin{align*}
		\norm{s_x - s_y} \ge \mu\norm{\nabla f^*(s_x)- \nabla f^*(s_y)}
	\end{align*}
	Hence, $f^*$ has a Lipschitz continuous gradient with $\frac{1}{\mu}$ by definition.

	(2): By implication of Lipschitz continuous gradient for a convex $f$ shown in Lemma \ref{lem:LCG},  we have 
\begin{align*}
	(\nabla f(x) - \nabla f(y)^T(x-y) \ge \frac{1}{L} \lVert \nabla f(x)-\nabla f(y)\rVert^2
\end{align*}
which implies (based on Lemma \ref{lem:useful_1})
\begin{align*}
	(s_x - s_y)^T (x-y) \ge \frac{1}{L} \norm{s_x - s_y}^2~\forall x \in \partial f^*(s_x), y \in \partial f^*(s_y)
\end{align*}
Hence, $f^*$ is strongly convex with parameter $\frac{1}{L}$ based on Lemma \ref{lem:SC_equiv}.

\end{proof}

\section{Conclusion}
  A simple proof is presented for the Fenchel duality between strong convexity and Lipschitz continuous gradient. The key idea is based on the equivalence or implication of strong convexity and Lipschitz continuous gradient for a general function. We formally show the relationship between all the conditions that are related to strong convexity and Lipschitz continuous gradient, respectively. Moreover, through this process, we also demonstrate some useful tricks in proving equivalence in optimization, which may be utilized to prove more important results in the future.

\bibliographystyle{plain}
\bibliography{ref}

\end{document}